\numberwithin{equation}{section}
\tikzstyle arrowstyle=[scale=1]
\tikzstyle directed=[postaction={decorate,decoration={markings,
    mark=at position .5 with {\arrow[arrowstyle]{stealth}}}}]
\tikzstyle reverse directed=[postaction={decorate,decoration={markings,
    mark=at position .65 with {\arrowreversed[arrowstyle]{stealth};}}}]
\newtheorem{thm}{Theorem}[section]
\newtheorem{cor}[thm]{Corollary}
\theoremstyle{plain}
\theoremstyle{remark}
\theoremstyle{definition}
\newtheorem{defn}[thm]{Definition}
\numberwithin{equation}{section}
\newcommand{\LL}{\mathcal{L}}
\newcommand{\A}{\mathcal{A}}
\newcommand{\B}{\mathcal{B}}
\newcommand{\R}{\mathbb{R}}
\newcommand{\C}{\mathbb{C}}
\begin{document}

\title{Topology of the icosidodecahedral arrangement}

\author{Ye Liu}
\address{Department of Mathematical Sciences, Xi'an Jiaotong-Liverpool University, Suzhou, China}
\email{yeliumath@gmail.com}



\date{}


\keywords{icosidodecahedral arrangement, $K(\pi,1)$ arrangement}

\begin{abstract}
The icosidodecahedral arrangement is introduced by M. Yoshinaga \cite{Yoshinaga2019} as the first known example that is a hyperplane arrangement whose Milnor fiber has torsions in first integral homology. In this note, we prove that the icosidodecahedral arrangement is $K(\pi,1)$, hence so is its Milnor fiber.
\end{abstract}

\maketitle


\section{The icosidodecahedral arrangement}
A hyperplane arrangement $\A=\{H_1,\ldots,H_n\}$ in $V=\C^{\ell}$ is a finite collection of affine hyperplanes in $\C^{\ell}$. An arrangement $\A$ is said to be central if every hyperplane $H_i$ is a linear hyperplane, and $\A$ is said to be real if each hyperplane is defined by a linear form with real coefficients. Given a hyperplane arrangement $\A$, the intersection poset  $L(\A)=\{\cap_{H\in\B}H\neq\varnothing\mid \B\subseteq\A\}$ is the set of nonempty intersections of hyperplanes in $\A$, partially ordered by reverse inclusion. Note that $L(\A)$ has a unique minimal element $V$, which is the intersection over empty set. There is a rank function on $L(\A)$ defined by $r(X)=\mathrm{codim}(X)$. The rank $r(\A)$ of $\A$ is defined as the maximal value of $r$. For a central arrangement $\A$, $r(\A)=r(\cap_{H\in\A}H)$.

The complement $M(\A)=\C^{\ell}\setminus\cup_{H\in\A}H$ is an interesting topological space whose cohomology ring is known to be isomorphic to the Orlik-Solomon algebra \cite{Orlik-Solomon1980}, which is defined solely by $L(\A)$. In particular, the integral homology is torsion-free. Among other results, the Poincar\'e polynomial of $M(\A)$ can be computed from $L(\A)$
\[
\pi(\A,t)=\sum_{X\in L(\A)}\mu(X)(-t)^{r(X)},
\]
where $\mu$ is the M\"obius function on $L(\A)$.

Another interesting space associated to a central hyperplane arrangement is the Milnor fiber. See \cite{Dimca2017} for details. Suppose $\A$ is a central hyperplane arrangement in $\C^{\ell+1}$, then for each hyperplane $H\in\A$, we choose a linear form $\alpha_H$ with $H=\mathrm{Ker}~\alpha_H$. The homogeneous polynomial $Q=\prod_{H\in\A}\alpha_H$ is called the defining polynomial of $\A$. Note that $Q^{-1}(0)=\cup_{H\in\A}H$. The map $Q$ then restricts to a locally trivial fibration $M(\A)\to\C^*$ and the fiber $Q^{-1}(1)$ is called the \emph{Milnor fiber} of $\A$, denoted by $F(\A)$. Unlike $M(\A)$, the Milnor fiber $F(\A)$ is a rather less known space. For example, it is not known whether the Betti numbers of $F(\A)$ are determined solely by $L(\A)$. On the other hand, torsions have been found in the integral homology of $F(\A)$ for the case of \emph{multiple arrangements} \cite{Cohen-Denham-Suciu2003}, and for ordinary arrangements in higher homology \cite{Denham-Suciu2014}. 

M. Yoshinaga \cite{Yoshinaga2019} has recently found an interesting example whose Milnor fiber has torsions in first integral homology. The arrangement is (the complexification of) a real central (hyper)plane arrangement in $\R^3$, called the \emph{icosidodecahedral arrangement} for it fits perfectly in an icosidodecahedron. Denote this arrangement by $\A_{ID}$. We describe how to construct $\A_{ID}$ out of an icosidodecahedron, see Figure \ref{icosi}. The arrangement $\A_{ID}$ consists of 16 planes in $\R^3$ divided into two classes: edge planes and diagonal planes. An edge plane is a plane cutting the icosidodecahedron along an equator made of $10$ edges, there are $6$ such planes. A diagonal plane is a plane cutting the icosidodecahedron along a closed path made of $6$ diagonals of pentagon faces, there are $10$ such planes. A more useful way to view $\A_{ID}$ is to consider its deconing with respect to one of its plane. For example, we shall use the deconing $\LL_{ID}$ of $\A_{ID}$ with respect to an edge plane. To obtain the deconing, we put one edge plane as $\{z=0\}$ in $\R^3$, then take a section of $\A_{ID}$ by the plane $\{z=1\}$. In this way, we obtain the affine line arrangement $\LL_{ID}$ shown in Figure \ref{deconing}. The reverse construction is called \emph{coning}. Note that the blue lines in $\LL_{ID}$ come from edges planes of $\A_{ID}$ and the red lines come from diagonal planes. The computations about $\A_{ID}$, both in \cite{Yoshinaga2019} and this note, are carried out on $\LL_{ID}$. We shall prove that $\A_{ID}$ is a $K(\pi,1)$ arrangement, that is, the complement $M(\A_{ID})$ (of the complexification) is $K(\pi,1)$.

\begin{figure}[ht]
\centering
\includegraphics[height=100mm]{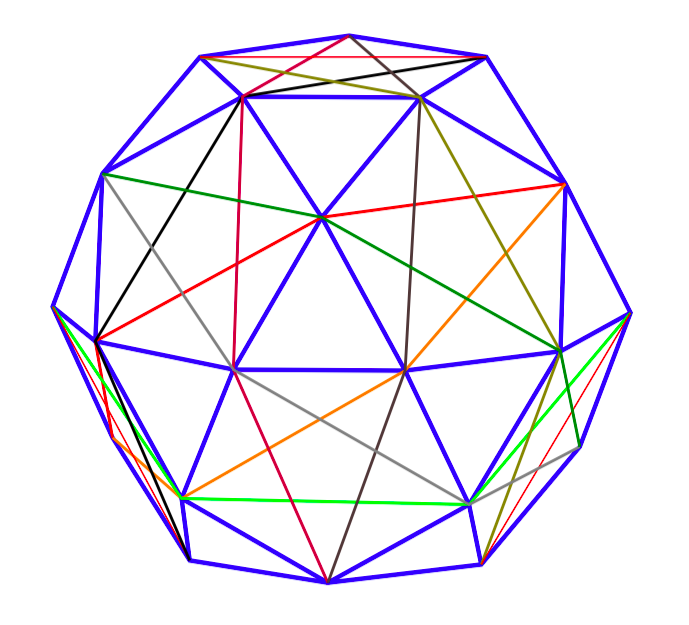}
\caption{The icosidodecahedral arrangement $\A_{ID}$}
\label{icosi}
\end{figure}

\begin{figure}[ht]
\centering
\includegraphics[height=100mm]{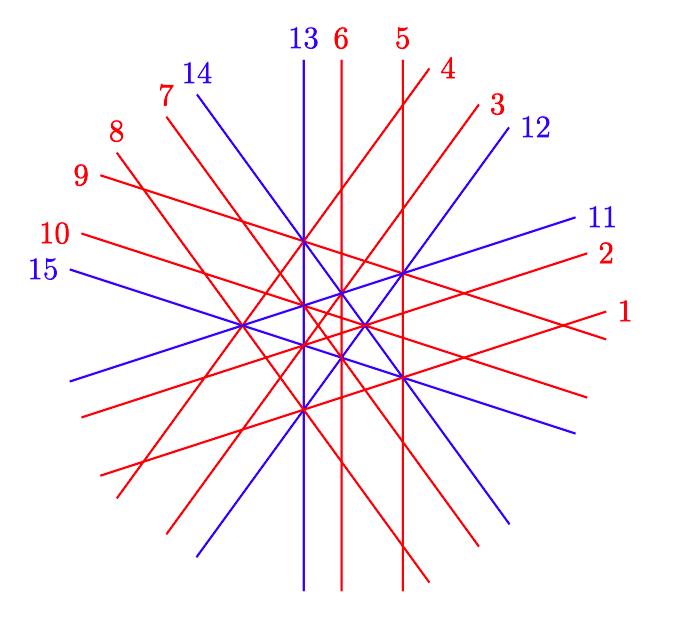}
\caption{The deconing $\LL_{ID}$ of $\A_{ID}$ with respect to an edge plane}
\label{deconing}
\end{figure}

\section{Properties that the icosidodecahedral arrangement does not enjoy}

For a real hyperplane arrangement, we may ask whether it is \emph{simplicial, supersolvable (fiber-type), free, $K(\pi,1)$, rational $K(\pi,1)$, factored,} satisfying the \emph{lower central series formula}, etc. In this section, we check that the icosidodecahedral arrangement fails all above properties except for the $K(\pi,1)$ property.

A real hyperplane arrangement is said to be \emph{simplicial} if each chamber can be transformed into the positive octant by a suitable coordinate change. P. Deligne \cite{Deligne1972} proved that simplicial arrangements are $K(\pi,1)$. But one easily sees that in the deconing $\LL_{ID}$ of $\A_{ID}$, there are pentagon chambers whose cone cannot be transformed into the octant $\R_{>0}^3$. Hence $\A_{ID}$ is not simplicial.

The Poincar\'e polynomial of $\LL_{ID}$ can be computed combinatorially
\[
\pi(\LL_{ID},t)=\sum_{X\in L(\LL_{ID})}\mu(X)(-t)^{r(X)}=1+15t+60t^2,
\]
and that of $\A_{ID}$
\[
\pi(\A_{ID},t)=(1+t)\pi(\LL_{ID},t)=1+16t+75t^2+60t^3.
\]
Note that $\pi(\A_{ID},t)$ does not factor as a product of degree one polynomials with integer coefficients. Thus $\A_{ID}$ is not free nor supersolvable (fiber-type) (see \cite{Terao1981, Jambu-Terao1984, Falk-Randell1985, Terao1986}). Furthermore, we have the following result.
\begin{thm}[\cite{Papadima-Yuzvinsky1999}, see also \cite{Falk-Randell2000}]
	For a rank $3$ arrangement $\A$, the following are equivalent
	\begin{itemize}
		\item $\A$ is fiber-type;
		\item the lower central series formula 
		\[
\pi(\A,-t)=\prod_{n\geq 1}(1-t^n)^{\phi_n}
\]
holds for $\A$, where $\phi_n$ is the rank of successive quotient $G_n/G_{n+1}$ of the lower central series $$\pi_1(M(\A))=G_1\supset G_2\supset \cdots$$
		\item $\A$ is rational $K(\pi,1)$.
	\end{itemize}
\end{thm}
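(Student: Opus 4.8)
The plan is to show that all three conditions are equivalent to a single algebraic property of the Orlik--Solomon algebra $A=H^*(M(\A);\C)$, namely Koszulness, and then to exploit the special combinatorics of rank $3$ to tie Koszulness back to the fiber-type condition. First I would dispose of the implications that hold in every rank. If $\A$ is fiber-type, then $M(\A)$ is an iterated fibration whose fibers are complex lines with finitely many points removed; such a tower is aspherical, so $M(\A)$ is genuinely $K(\pi,1)$ and a fortiori rational $K(\pi,1)$, while its fundamental group is an iterated almost-direct product of free groups, for which the lower central series formula is classical (Falk--Randell, Kohno). Thus fiber-type implies the other two conditions directly, and only the converses require work.

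The heart of the argument is the formality of arrangement complements: the Sullivan minimal model of $M(\A)$ is the one built from $A$ with zero differential. I would exploit this in two ways. On one hand, for a $1$-formal space the ranks $\phi_n$ are computed from the holonomy Lie algebra, which depends only on the cup product $H^1\otimes H^1\to H^2$, i.e.\ on the quadratic data of $A$; the lower central series formula $\pi(\A,-t)=\prod_{n\ge1}(1-t^n)^{\phi_n}$ is then precisely the Hilbert-series identity $H_A(t)\,H_{A^!}(-t)=1$ relating $A$ to its quadratic dual, which is exactly the condition that $A$ be Koszul. On the other hand, Papadima--Yuzvinsky's main theorem states that a formal space is rational $K(\pi,1)$ iff its minimal model is generated in degree one, and for a formal space this is again equivalent to Koszulness of $A$. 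Consequently both the lower central series formula and the rational $K(\pi,1)$ property are equivalent to ``$A$ is Koszul,'' and the whole theorem reduces to the single equivalence \emph{$A$ Koszul $\iff$ $\A$ fiber-type} for arrangements of rank $3$.

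One direction of this reduction is general: supersolvable arrangements have Koszul Orlik--Solomon algebras (Shelton--Yuzvinsky). The remaining direction, Koszul $\Rightarrow$ fiber-type in rank $3$, is where I expect the genuine difficulty to lie. Here I would pass to the deconed line arrangement and encode the combinatorics in the multiplicities $m_p$ of the rank-$2$ flats. Koszulness forces the entire quadratic-dual Hilbert series, and hence all the ranks $\phi_n$, to match the Poincar\'e series; feeding in the combinatorial formulas that express the $\phi_n$ through the $m_p$ should produce a rigid numerical relation that can only be satisfied when some rank-$2$ flat is \emph{modular}. The existence of a modular point is exactly the supersolvability (fiber-type) criterion in rank $3$, and it furnishes the fibration $M(\A)\to M(\A')$ that closes the circle of implications. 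The delicate and decisive step is this purely combinatorial deduction — that a rank-$3$ Orlik--Solomon algebra cannot be Koszul without a modular flat; everything surrounding it is either soft topology (formality, asphericity of the fibration tower) or a direct appeal to Koszul duality.
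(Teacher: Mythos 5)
First, a point of order: the paper does not prove this statement. It is quoted from Papadima--Yuzvinsky (see also the Falk--Randell survey) and used purely as a black box to conclude that $\A_{ID}$ is neither rational $K(\pi,1)$ nor subject to the lower central series formula. So there is no in-paper proof to compare yours against, and your proposal must be judged as a reconstruction of the cited result. Its architecture is indeed the standard one: fiber-type implies everything else by classical results (Falk--Randell, Kohno); formality of $M(\A)$ plus the Papadima--Yuzvinsky theorem identifies rational $K(\pi,1)$ with Koszulness of the Orlik--Solomon algebra $A$; Shelton--Yuzvinsky gives supersolvable $\Rightarrow$ Koszul; and the rank-$3$-specific content is the passage back to a modular rank-$2$ flat.

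Two genuine gaps remain. First, your claim that the LCS formula ``is exactly the condition that $A$ be Koszul'' is false as stated: the Hilbert-series identity $H_A(t)\,H_{A^!}(-t)=1$ is a necessary consequence of Koszulness, not equivalent to it (there exist numerically Koszul quadratic algebras that are not Koszul), and moreover $1$-formality identifies the enveloping algebra of $\mathrm{gr}(\pi_1)\otimes\mathbb{Q}$ with the quadratic dual of the \emph{quadratic closure} of $A$, not of $A$ itself, so the LCS formula does not even directly assert that numerical identity for $A^!$ unless you first establish that $A$ is quadratic. You therefore get Koszul $\Rightarrow$ LCS cheaply, but LCS $\Rightarrow$ Koszul is not available by this route and must instead be recovered by closing the cycle of implications through the fiber-type condition. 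Second, and more seriously, the one implication that carries the entire rank-$3$ content --- that a rank-$3$ arrangement satisfying the LCS formula (equivalently, with Koszul $A$) must possess a modular rank-$2$ flat --- is precisely the step you leave as ``should produce a rigid numerical relation.'' This is not a routine verification: it requires the explicit lower bound for $\phi_3$ in terms of the local multiplicities $m_X$ of the rank-$2$ flats and a combinatorial argument that equality forces modularity. Without that computation the proposal is a correct road map rather than a proof; if you want to complete it, that is the lemma to prove.
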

We have concluded that $\A_{ID}$ is not rational $K(\pi,1)$, and the lower central series formula does not hold for $\A_{ID}$.

L. Paris \cite{Paris1995} proved that a \emph{factored} line arrangement and its cone are $K(\pi,1)$. However, $\LL_{ID}$ is not factored as we check below.
\begin{defn}
	A \emph{factorization} of an affine line arrangement $\LL$ is a partition $(\Pi_1,\Pi_2)$ of $\LL$ such that
	\begin{itemize}
		\item For any $\ell_1\in\Pi_1$ and $\ell_2\in\Pi_2$, $\ell_1\cap \ell_2\neq\varnothing$, and
		\item For any rank $2$ intersection $X\in L(\LL)_2$, either $\#(\Pi_1\cap\LL_X)=1$ or $\#(\Pi_2\cap\LL_X)=1$,
	\end{itemize}
	where $\LL_X=\{\ell\in\LL\mid \ell\supset X\}$. The arrangement $\LL$ is \emph{factored} is there is a factorization of $\LL$.
\end{defn}
We prove that $\LL_{ID}$ is not factored, that is, there is no factorization of $\LL_{ID}$. Suppose there is a factorization $(\Pi_1,\Pi_2)$ and $H_1\in\Pi_1$. Look at the multiplicity $2$ intersection $X=H_1\cap H_9$ and $(\LL_{ID})_X=\{H_1,H_9\}$. The second condition asserts that $H_9\in\Pi_2$. Similarly, by looking at the multiplicity $2$ intersection $H_3\cap H_9$, we see that $H_3\in\Pi_1$. But the intersection $H_1\cap H_3$ contradicts to the second condition.

\section{The icosidodecahedral arrangement is $K(\pi,1)$}
Although $\A_{ID}$ fails to satisfy many sufficient conditions for $K(\pi,1)$-ness, such as simplicial, supersolvable, factored conditions, we managed to prove $\A_{ID}$ is $K(\pi,1)$ using M. Falk's test \cite{Falk1995} for a central real 3-arrangement to be $K(\pi,1)$.

Let $\LL$ be an affine line arrangement in $\R^2$ and $\Gamma$ be the $2$-dimensional CW complex consisting of all bounded strata of $\R^2$ stratified by $\LL$. We use the terms vertices, edges and faces for $0$-, $1$- and $2$-dimensional cells, respectively. A \emph{corner} of $\Gamma$ is a pair $(v,f)$ where $v$ is a vertex of $\Gamma$ and $f$ is a face of $\Gamma$ such that $v$ lies in the closure of $f$. A \emph{system of weights} $\Delta$ on $\Gamma$ is a function on the set $\mathrm{Corner}(\Gamma)$ of corners of $\Gamma$ to $\R_{\geq 0}$. For a vertex $v$ of $\Gamma$, its \emph{link} $\Lambda_v$ is the simple graph whose vertices are edges of $\Gamma$ incident with $v$, and two vertices of $\Lambda_v$ are connected by an edge if the corresponding edges of $\Gamma$ are incident with a common face. Note that if $\Lambda_v$ is a cycle, then it contains $2m$ vertices, where $m$ is the multiplicity of $v$, i.e. the number of lines in $\LL$ through $v$.

A \emph{circuit} at $v$ is a closed walk on $\Lambda_v$, noting that a circuit may traverse an edge more than once. A system of weight $\Delta$ on $\Gamma$ induces a labeling of edges of $\Lambda_v$ for any vertex $v$. Furthermore, for a circuit $\xi$ at $v$, denote by $\Delta(\xi)$ the sum of labels of $\xi$. Note that in $\Delta(\xi)$, the labels are counted with multiplicities.
\begin{thm}[\cite{Falk1995}]\label{falk}
	If $\Gamma$ admits a system of weights $\Delta$ satisfying the following two conditions,
	\begin{enumerate}
		\item (Asphericity) For each face $f$ of $\Gamma$,
		\[
		\sum_{v:~(v,f)\in\mathrm{Corner}(\Gamma)}\Delta(v,f)\leq d(f)-2,
		\]
		where $d(f)$ is the number of vertices in the closure of $f$.
		\item ($\LL$-admissibility) For each vertex $v$ of $\Gamma$ with multiplicity $m$ and each of the following types of circuit $\xi$ at $v$, 
		\[
		\Delta(\xi)\geq 2.
		\]
		\begin{enumerate}[(i)]
		\item If $\Lambda_v$ is a cycle $(e_1,e_2,\ldots,e_k,e_1)$ ($e_i$ the vertices of $\Lambda_v$), then 
		$$\xi=(e_1,e_2,\ldots,e_k,e_1).$$
		
        \item If $\Lambda_v$ is a cycle $(e_1,e_2,\ldots,e_k,e_1)$ or a path $(e_1,e_2,\ldots,e_k)$ with $k>m+1$, then 
       $$
       \xi=(e_j,\ldots,e_{m+j},e_{m+j+1},e_{m+j},\ldots,e_j), ~1\leq j\leq k-m-1.
       $$
        \item If $\Lambda_v$ is a cycle $(e_1,e_2,\ldots,e_k,e_1)$ or a path $(e_1,e_2,\ldots,e_k)$ with $k>m$, then 
       $$\xi=(e_j,\ldots,e_{m+j},\ldots,e_j,\ldots,e_{m+j},\ldots,e_j),~1\leq j\leq k-m.$$
        \item If $\Lambda_v$ is a cycle $(e_1,e_2,\ldots,e_k,e_1)$ or a path $(e_1,e_2,\ldots,e_k)$ with $k>m$, then 
        $$\xi=(e_j,\ldots,e_{m+j},e_{m+j-1},e_{m+j},\ldots,e_j,e_{j+1},e_j),~1\leq j\leq k-m.$$
        \end{enumerate}
	\end{enumerate}
	then the cone $c\LL$ is a $K(\pi,1)$ arrangement.
\end{thm}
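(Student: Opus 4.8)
The plan is to prove Falk's criterion by recasting the $K(\pi,1)$ question as a combinatorial non-positive-curvature statement about a two-dimensional complex, and then running the Gersten--Stallings weight test through a combinatorial Gauss--Bonnet argument.

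First I would reduce the assertion to the asphericity of a $2$-complex. Since $c\LL$ is the cone of the affine arrangement $\LL$, the complement of its complexification splits as $M(c\LL)\cong M(\LL)\times\C^*$, and because $\C^*\simeq S^1$ is a $K(\mathbb{Z},1)$, the space $M(c\LL)$ is aspherical if and only if $M(\LL)$ is. The complement $M(\LL)$ of a complexified affine line arrangement is homotopy equivalent to a $2$-dimensional CW complex $K$ of Salvetti type, built from the face poset of $\LL$, whose cells are indexed by the strata and whose bounded part is modeled on $\Gamma$. For a $2$-complex, asphericity is equivalent to $\pi_2(K)=0$, since the universal cover is then a simply connected acyclic $2$-complex and hence contractible. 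So it suffices to show $\pi_2(K)=0$.

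Next I would install the angle structure, interpreting each weight $\Delta(v,f)$ as the geometric corner angle $\pi\,\Delta(v,f)$ of the corresponding corner of a $2$-cell of $K$. With this convention the link of each vertex of $K$ is governed by the graph $\Lambda_v$, and each essential closed walk in that link is labeled by a circuit $\xi$ of total geometric angle $\pi\,\Delta(\xi)$. The two hypotheses then acquire geometric meaning: the Asphericity condition $\sum_v\Delta(v,f)\le d(f)-2$ says that each $2$-cell, viewed as a weighted $d(f)$-gon, has angle sum at most the Euclidean value $(d(f)-2)\pi$, i.e.\ non-positive combinatorial curvature; the $\LL$-admissibility condition $\Delta(\xi)\ge2$ says that every essential cycle in every vertex link has total angle at least $2\pi$, which forces non-positive curvature at each interior vertex of a diagram. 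I would then conclude as follows. If $K$ carried a non-trivial reduced spherical diagram $S\colon S^2\to K$, the combinatorial Gauss--Bonnet theorem would give
\[
\sum_{\text{faces of }S}\kappa(f)+\sum_{\text{vertices of }S}\kappa(v)=2\pi\,\chi(S^2)=4\pi,
\]
where the face curvatures are non-positive by Asphericity and the vertex curvatures are non-positive because the link cycle occurring at each interior vertex is one of the admissible circuits and hence has total angle $\ge2\pi$ by $\LL$-admissibility. This contradicts the positive right-hand side, so no non-trivial reduced spherical diagram exists; by the theory of diagrammatic reducibility (in the sense of Gersten and Stallings) this gives $\pi_2(K)=0$, whence $K$, and therefore $M(\LL)$ and $M(c\LL)$, are aspherical.

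The hard part is the combinatorial bookkeeping concealed in the phrase ``the link cycle occurring at each interior vertex is one of the admissible circuits.'' Two points must be nailed down: that the link of every vertex of $K$ that can appear in a reduced spherical diagram is controlled exactly by $\Lambda_v$, and that the four families (i)--(iv) form a complete list of the reduced essential closed walks that such a link can realize. The dichotomy between $\Lambda_v$ being a cycle or a path mirrors whether $v$ is interior to or on the boundary of the bounded complex $\Gamma$, and the length constraints $k>m$ and $k>m+1$ record how a diagram disk can fold back on itself around a vertex of multiplicity $m$ whose incident $2$-cells are large polygons. Proving that checking only these finitely many circuit types is equivalent to the full vertex curvature condition — so that the stated hypotheses genuinely certify non-positive curvature at every vertex — is the technical crux and is where Falk's detailed analysis of the link graphs of arrangement complexes is required.
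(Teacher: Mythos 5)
First, a point of context: the paper itself gives no proof of Theorem~\ref{falk} --- it is quoted verbatim from \cite{Falk1995} and used as a black box --- so the only meaningful comparison is with Falk's published argument. Your architecture does match that argument in outline: the reduction $M(c\LL)\cong M(\LL)\times\C^*$, the replacement of $M(\LL)$ by a $2$-dimensional CW model, the interpretation of $\pi\,\Delta(v,f)$ as corner angles, and the exclusion of reduced spherical diagrams by combinatorial Gauss--Bonnet are exactly the skeleton of Falk's weight test, which is built in the Gersten--Stallings tradition.

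However, there is a genuine gap, and it sits precisely at the step you flag as the ``technical crux'' but do not carry out. What your Gauss--Bonnet argument proves, as written, is Gersten's weight test: asphericity follows if \emph{every} closed corner-walk arising at an interior vertex of a reduced diagram has weight at least $2$. Falk's theorem is strictly stronger than that: its entire content is that the inequality $\Delta(\xi)\geq 2$ need only be verified for the four finite families (i)--(iv), and the proof consists in showing that in a reduced diagram over the arrangement complex, the corner-walk at any interior vertex must, after translation into $\Lambda_v$, contain a subwalk from one of those families --- this is where the constraints $k>m$ and $k>m+1$, the cycle/path dichotomy (interior versus boundary vertices of $\Gamma$), and the real structure of the line arrangement enter, and only then does nonnegativity of $\Delta$ give $\Delta\geq 2$ for the whole walk. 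Two specific assertions in your sketch are therefore unsubstantiated: first, that ``the link of every vertex of $K$ \ldots is controlled exactly by $\Lambda_v$'' --- in Salvetti-type models the $0$-cells correspond to chambers, not to vertices of $\Gamma$, and the passage to the bounded complex and its graphs $\Lambda_v$ is itself part of Falk's construction; second, that the occurring link walks are ``essential cycles'' handled by the hypotheses --- note that the circuits of types (iii) and (iv) traverse edges more than once, so they are not reduced cycles in the sense of the standard weight test, which signals that the standard test is not being applied off the shelf. Without the classification of diagram corner-walks in terms of (i)--(iv), your argument proves a different criterion (one quantified over infinitely many walks, and not the one applied in Section~3 of this paper); with it, the argument is complete, but that classification is the theorem, not a bookkeeping afterthought.
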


We are ready to use Falk's test to prove our result.
\begin{thm}
	The icosidodecahedral arrangement $\A_{ID}$ is $K(\pi,1)$, so is its Milnor fiber.
\end{thm}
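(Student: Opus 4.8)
The plan is to apply Falk's criterion (Theorem \ref{falk}) directly to the deconing $\LL_{ID}$, which by that theorem will show that the cone $c\LL_{ID}=\A_{ID}$ is $K(\pi,1)$, and then to transfer asphericity from the complement $M(\A_{ID})$ to the Milnor fiber $F(\A_{ID})$ through the Milnor fibration. Thus the real content is the construction of a system of weights $\Delta$ on the CW complex $\Gamma$ of bounded strata of $\R^2$ cut out by $\LL_{ID}$ that simultaneously satisfies the asphericity and $\LL$-admissibility conditions.

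First I would make $\Gamma$ explicit. Reading off Figure \ref{deconing}, I would list all bounded faces together with their vertex counts $d(f)$ (triangles, quadrilaterals, and pentagons), and all vertices together with their multiplicities $m$ and the combinatorial type of each link $\Lambda_v$ (whether it is a cycle or a path, and its length). Here I would lean heavily on the residual symmetry of $\LL_{ID}$: the icosahedral symmetry of $\A_{ID}$ that fixes the chosen edge plane descends to a symmetry group acting on $\Gamma$, so the faces and vertices fall into a small number of orbits and it suffices to assign and check weights on orbit representatives.

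The heart of the proof, and the step I expect to be the main obstacle, is the actual choice of the corner weights $\Delta(v,f)\in\R_{\geq 0}$. The asphericity condition imposes an upper bound on the total weight carried by each face (at most $d(f)-2$, so a triangle may carry total weight at most $1$ and a pentagon at most $3$), while $\LL$-admissibility imposes lower bounds of the form $\Delta(\xi)\geq 2$ for every vertex and for each of the four families of circuits in Theorem \ref{falk}. These constraints pull against each other, and one must find weights threading between them. I would set up the weights as unknowns constrained by the face inequalities, then verify that the induced edge labels at each vertex satisfy all the circuit inequalities, reducing by symmetry to one representative per orbit. The vertices of small multiplicity, whose links are short cycles, are the tightest cases and deserve the most care, since there the admissibility circuits revisit edges and the summed labels must still reach $2$ without violating the face caps.

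Once the weight system is verified, Theorem \ref{falk} yields that $\A_{ID}=c\LL_{ID}$ is $K(\pi,1)$, i.e.\ $M(\A_{ID})$ is aspherical. Finally I would invoke the Milnor fibration $F(\A_{ID})\hookrightarrow M(\A_{ID})\to\C^*$. Since $\C^*$ is homotopy equivalent to $S^1$, its higher homotopy groups vanish, so the long exact sequence of the fibration sandwiches $\pi_k(F(\A_{ID}))$ between $\pi_{k+1}(\C^*)=0$ and $\pi_k(M(\A_{ID}))=0$ for every $k\geq 2$, forcing $\pi_k(F(\A_{ID}))=0$. Hence $F(\A_{ID})$ is aspherical as well, completing the proof.
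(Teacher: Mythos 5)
Your overall strategy is exactly the one the paper follows: apply Falk's criterion to the deconing $\LL_{ID}$, exploit the residual symmetry to reduce the number of face and vertex orbits to check, and then run the homotopy long exact sequence of the Milnor fibration over $\C^*$ to transfer asphericity to $F(\A_{ID})$. The final step about the Milnor fiber is complete and correct as you state it.

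The gap is at precisely the point you yourself identify as the heart of the proof: you never actually produce the system of weights. Describing the weights as ``unknowns constrained by the face inequalities'' and asserting that one must ``find weights threading between'' the upper bounds from asphericity and the lower bounds from $\LL$-admissibility is a description of the problem, not a solution to it. There is no a priori reason such a system exists --- Falk's test is only a sufficient condition, and the two families of inequalities genuinely pull against each other (the asphericity caps are tight: the paper's solution attains equality on every face). A complete proof must exhibit concrete values and verify the circuit inequalities at each vertex orbit. For reference, the paper assigns explicit rational weights (for instance $3/5$ at the corners labelled $a,c$, $2/5$ at $b,f,h,j,m,p$, $1/5$ at $d,e,g,i,k,n$, $1$ at $\ell$, and $3/10$ at $q$ in its Figure 3), saturating all asphericity equalities, and then checks the four circuit types at the five symmetry classes of relevant vertices, e.g.\ at the vertex of multiplicity $4$ with link an $8$-cycle the type (i) sum is $d+2e+2f+2h+k=12/5\geq 2$. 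Until you carry out this explicit construction and verification, the claim that $\A_{ID}$ is $K(\pi,1)$ is not established.
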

\begin{proof}
	Let $\LL$ be the deconing $\LL_{ID}$ of $\A_{ID}$. We shall find an aspherical and $\LL$-admissible system of weights $\Delta$ on $\Gamma$. Using symmetry, the values of $\Delta$ are determined by those labeled in Figure 	\ref{sol}.
\begin{figure}[ht]
\centering
\includegraphics[height=100mm]{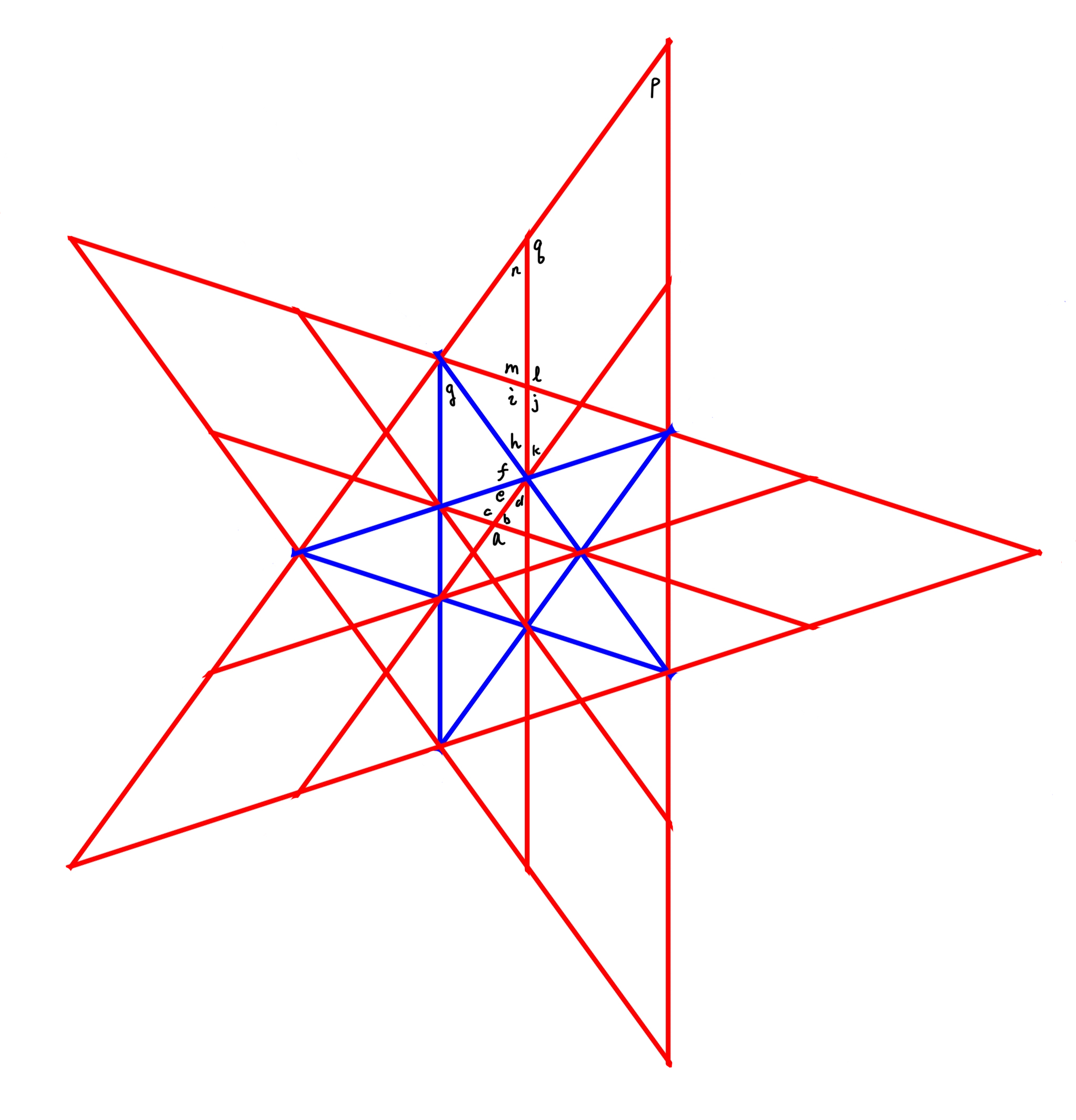}
\caption{A system of weights}
\label{sol}
\end{figure}
	
	We make a table of all shapes of $\Lambda_v$ and circuits appearing in this situation, where we use $123$ for abbreviation of $(e_1,e_2,e_3)$ and so on. Note that we have suppressed those circuits obtained by translations of indices.
	\begin{center}
    \begin{tabular}{ | l | l | l | l | l | l | l |}
    \hline
    $\Lambda_v$ & $m$ & Type (i) & Type (ii) & Type (iii) & Type (iv) \\ \hline
    12341 & $2$ & 12341  & 1234321 & 123212321 & 123232121 \\ \hline
    123456781 & $4$ & 123456781 & 12345654321 & 12345432123454321 & 1234545432121 \\ \hline
    123456 & $4$ & N/A & 12345654321 & 12345432123454321 & 1234545432121 \\ \hline
    123 & $2$ & N/A & N/A & 123212321 & 123232121 \\ \hline
    12 & $2$ & N/A & N/A & N/A & N/A\\
    \hline
    \end{tabular}
\end{center}
	
	The strategy is to attain equalities in the asphericity condition as much as possible. Here we list a valid solution $\Delta$
	\[
	a=c=\frac{3}{5}, ~b=f=h=j=m=p=\frac{2}{5},~d=e=g=i=k=n=\frac{1}{5},~\ell=1,~q=\frac{3}{10}.
	\]
	It is easily checked that all equalities in the asphericity condition hold in this case. To check the $\LL$-admissibility condition, thanks to symmetry, we only need to check $5$ vertices: those surrounded by weights (labels) $abcb, defhkhfe, mij\ell, mhghm, nq$. The links of the first three are cycles of length $4,8,4$ respectively, and the links of the last two are paths. Note that the vertex whose corner labeled by $p$ has link two vertices connected by an edge, which does not appear in the $\LL$-admissibility condition, so we can omit it. For the above five vertices, all or some of the four types of circuits will appear. Here we may check for one case as an example and the reader can easily complete the rest following the above table.
	
	Let us look at the vertex $v$ surrounded by $defhkhfe$. The link $\Lambda_v$ is a cycle of length $8$ with labels these letters. In this case, all four types of circuits appear. For type (i) circuits $\xi_1$, the sum $\Delta(\xi_1)=d+2e+2f+2h+k=\frac{12}{5}>2$. For type (ii) circuits $\xi_2$, the smallest possible sum $\Delta(\xi_2)=2(d+e+f+h+k)=\frac{14}{5}>2$. For type (iii) circuits $\xi_3$ and type (iv) circuits $\xi_4$, the smallest possible sums $\Delta(\xi_3)\geq \Delta(\xi_4)$ by their shapes, and $\Delta(\xi_4)=4e+2f+2h+4k=\frac{16}{5}>2$. Hence we have checked that the $\LL$-admissibility condition holds for the vertex $v$.
	
	Applying Falk's test (Theorem \ref{falk}), we conclude that $\A_{ID}$ is $K(\pi,1)$. The homotopy long exact sequence of the Milnor fibration proves that the Milnor fiber is also $K(\pi,1)$.
\end{proof}
There are some direct consequences, see \cite{Randell1997}.
\begin{cor}
The fundamental group $\pi_1(M(\A_{ID}))$ is of type FL and of cohomological dimension $3$. In particular, it is torsion-free. 
\end{cor}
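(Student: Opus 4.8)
The plan is to deduce every assertion of the corollary formally from the fact, just established, that $M(\A_{ID})$ is a $K(\pi,1)$ space, combined with the standard structural properties of complex arrangement complements. Write $\pi=\pi_1(M(\A_{ID}))$, so that the theorem says precisely that $M(\A_{ID})$ is a model for the classifying space $K(\pi,1)$.

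First I would recall the standard fact (see \cite{Randell1997}) that the complement $M(\A)$ of any complex hyperplane arrangement has the homotopy type of a finite CW complex of dimension equal to the rank $r(\A)$. Since $\A_{ID}$ is central and essential with $r(\A_{ID})=3$ (as confirmed by the degree of $\pi(\A_{ID},t)$), this produces a finite CW complex of dimension $3$ homotopy equivalent to $M(\A_{ID})$. Being homotopy equivalent to the aspherical space $M(\A_{ID})$, this finite complex is itself aspherical, hence a finite $K(\pi,1)$; thus $\pi$ is a group of type $F$. The augmented cellular chain complex of its universal cover is then a resolution
\[
0\to C_3\to C_2\to C_1\to C_0\to\mathbb{Z}\to 0
\]
of the trivial module $\mathbb{Z}$ by finitely generated free $\mathbb{Z}\pi$-modules (the $C_i$ being free on the $i$-cells). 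This exhibits $\pi$ as being of type FL and shows at once that $\mathrm{cd}(\pi)\le 3$.

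For the reverse inequality I would invoke the identification of group cohomology with the cohomology of the $K(\pi,1)$: the Poincaré polynomial $\pi(\A_{ID},t)=1+16t+75t^2+60t^3$ computed earlier has nonzero top coefficient, so $H^3(\pi;\mathbb{Z})\cong H^3(M(\A_{ID});\mathbb{Z})\neq 0$ and therefore $\mathrm{cd}(\pi)\ge 3$, giving $\mathrm{cd}(\pi)=3$. Finally, torsion-freeness follows formally from finite cohomological dimension: a nontrivial element of finite order would generate a finite cyclic subgroup $C\le\pi$, and since cohomological dimension does not increase on passing to subgroups we would have $\mathrm{cd}(C)\le\mathrm{cd}(\pi)=3$, contradicting the fact that every nontrivial finite group has infinite cohomological dimension. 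I do not anticipate any real obstacle here, as the entire argument is a formal consequence of asphericity together with the finiteness of the CW model; the only point requiring a word of care is that the finite complex inherits asphericity from $M(\A_{ID})$, which is immediate because the two are homotopy equivalent.
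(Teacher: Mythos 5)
Your proof is correct and takes essentially the same route as the paper, which states the corollary without argument and delegates it to the cited reference \cite{Randell1997}: asphericity plus the finite $3$-dimensional CW model for $M(\A_{ID})$ gives type FL and $\mathrm{cd}(\pi)\le 3$, the nonzero top coefficient $60$ of $\pi(\A_{ID},t)$ gives $H^3(\pi;\mathbb{Z})\neq 0$ and hence $\mathrm{cd}(\pi)=3$, and torsion-freeness follows because a finite cyclic subgroup would have finite cohomological dimension, which is impossible. Your write-up simply supplies, correctly, the standard details behind that citation.
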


\section*{Acknowledgement} I would like to thank Masahiko Yoshinaga for his interest toward this result and permission to use the figures from \cite{Yoshinaga2019}.

\renewcommand\refname{Reference}
\bibliographystyle{hep}
\bibliography{ico.bib}
\end{document}